\def\filedate{1 September 2007}
\def\filestatus{4th draft}
\newcommand{\Sym}{\mathrm{Sym}}\newcommand{\Alt}{\mathrm{Alt}}
 \providecommand{\ker}{\mathrm{Ker\ }}
\providecommand{\det}{\mathrm{det\ }} 
\providecommand{\dim}{\mathrm{dim\ }} 
\providecommand{\lcm}{\mathrm{lcm}}
\providecommand{\hcf}{\mathrm{hcf}}
\newtheorem{theorem}{Theorem}
\newtheorem{lemma}[theorem]{Lemma}
\newtheorem{proposition}[theorem]{Proposition}
\DeclareMathOperator{\sgn}{sgn}
\title[On the distribution of conjugacy classes]{On the distribution of conjugacy classes between the
cosets of a finite group in a cyclic extension}
\author{John R. Britnell}\author{John R. Britnell \and Mark Wildon}
\begin{document}
\maketitle

\begin{abstract}
Let $G$ be a finite group and~$H$ a normal subgroup
such that~$G/H$ is cyclic. Given a conjugacy class~$g^G$ 
of~$G$ we define its centralizing subgroup
to be~$HC_G(g)$. Let~$K$ be such
that $H\le K\le G$. We show that the $G$-conjugacy classes 
contained in $K$ whose centralizing subgroup is~$K$, are
equally distributed between the cosets of~$H$ in~$K$.
The proof of this result is entirely elementary.
As an application we find expressions for the number of conjugacy 
classes of~$K$ under its own action, in terms of quantities 
relating only to the action of~$G$. 
\end{abstract}

\section{Introduction}
Let $G$ be a finite group and $H$ a normal subgroup
such that the quotient~$G/ H$ is cyclic. In this paper
we establish a quite general result (Theorem \ref{MT}) about
the distribution of the conjugacy classes of $G$ between
the cosets of $H$. A key idea in this work is that of the
centralizing subgroup of a conjugacy class; the centralizing
subgroup of $g^G$ is defined
to be the smallest subgroup of~$G$ containing both~$H$ and the 
centralizer~$C_G(g)$.
This subgroup 
determines how the class splits when the
conjugacy action is restricted to subgroups of $G$ containing~$H$.
We demonstrate that 
the centralizing subgroup is fundamental
to an understanding of the distribution of the conjugacy classes of $G$.


Theorem~\ref{MT}
states that the conjugacy classes with a particular centralizing subgroup~$K$ are
equally distributed amongst the cosets of $H$ in $K$; the proof occupies the greater part
of the paper. In Section \ref{application} we present an interesting application of Theorem~\ref{MT}: 
enumerating the conjugacy classes of a subgroup $K$ in the range $H\le K\le G$ in terms of the numbers of conjugacy classes of $G$ contained in various subgroups.

The reader may recognize that our main result has a character-theoretic flavour.
Indeed it seems likely that Theorem~\ref{MT} can
be proved by character theory: specifically by means of a combination of Clifford Theory
and Brauer's Permutation Lemma (for an account of these subjects see \cite{Isaacs}). 
So far as the authors have investigated, it appears unlikely
that such an approach will lead to
a shorter proof than the elementary one given here.

The proof of Theorem \ref{MT} relies on a preliminary result (Lemma \ref{PL}), which
states that the number of conjugacy classes in a generating coset of $G/H$ is equal to the number of
conjugacy classes of $G$ contained in $H$ which do not split when the action by conjugacy is restricted to $H$.
Although the proof of this result is straightforward, we are not aware of any previous appearance of the
fact in the literature, at least in this general form. The special case where $|G/H|=2$ is of course
well known and often cited, partly because of its usefulness in deriving the character table of $\Alt(n)$
from that of $\Sym(n)$; see~\cite{JL} for example. 

In \cite{BW1} the authors present a result which relies upon a special case of Lemma~\ref{PL}, together 
with Hall's Marriage Theorem. 
It is shown, in the case where $|G/H|$ is prime, that the set of 
conjugacy classes whose centralizing group is $G$ can be partitioned in such a way that each part 
contains one class from
each coset, and any two classes in the same part contain elements which commute with one another.

The importance of Lemma~\ref{PL} in the present paper is that it can be used to derive 
a set of linear equations which relate the numbers of conjugacy classes in
different cosets of $H$ which have a given centralizing subgroup. This allows us to reduce the problem to one of linear
algebra: namely, finding the dimension of one of the eigenspaces of a certain matrix. A further reduction of the
problem by means of a tensor factorization allows us to focus on the case where $G/H$ is a cyclic $p$-group; in
this form, the problem turns out to be readily soluble.

From this brief description of the proof, it will be clear to the reader that the proof is
to be presented backwards. Rather than building up to the main theorem, we shall proceed by reducing it
by stages to a simpler problem. Our justification for this \emph{modus operandi}, if one is needed,
is that it seems the most---perhaps the only---coherent way to present the argument.

We have not attempted to deal with cases where $G/H$ is non-cyclic. That the quotient should be abelian is
necessary (and sufficient) for each conjugacy class to lie wholly within a single coset. The case of a
non-cyclic, abelian quotient seems problematic however; for example, 
if~$G$ is nilpotent of class~$2$, and~$H$ is the centre of~$G$, then
the elements of $G$ whose centralizing subgroup is~$G$ are precisely the elements of~$H$.\footnote{The
authors would like to thank Peter~Neumann for this observation.} This precludes 
the possibility of
a result directly analogous to Theorem~\ref{MT}; however our methods
do seem to provide some information about the general case, and the
problem is surely worthy of further study. 

Throughout this paper we adopt the convention that a summation sign indicates a sum over a single variable,
which is in every case the variable denoted by the first letter appearing in the conditions below the sign.

\section{Statement of the main theorem and the principal lemma}
Throughout this paper, we shall assume that $G$ is a finite group, and that $H$ is a normal subgroup of
$G$ such that $G/H$ is cyclic.

\begin{definition*}
For an element $g\in G$, we define the \emph{centralizing subgroup} $\Delta_g$ of~$g$
with respect to $H$, to be $HC_G(g)$. For each conjugacy class $X$ of $G$,
we define the centralizing subgroup $\Delta_X$ to be $\Delta_g$ for an element $g\in X$.
\end{definition*}

\noindent In the present section we consider centralizing subgroups only with respect to $H$, and shall not
always mention $H$ explicitly. Later in the paper, however, we shall have occasion to refer to centralizing
subgroups with respect to other subgroups of $G$.

We can now state our main theorem.

\begin{theorem}\label{MT}
Let $G$ be a finite group, and let $H$ be a normal subgroup of $G$ such that~$G/H$ is cyclic. Let~$K$ be such
that $H\le K\le G$. Then the $G$-conjugacy classes contained in $K$ whose centralizing subgroup is $K$, are
equally distributed between the cosets of $H$ in $K$.
\end{theorem}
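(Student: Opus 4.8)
My plan is to follow the reduction strategy sketched in the introduction, working from the main theorem backwards to a tractable linear-algebra problem. First I would set up notation: write $G/H$ as a cyclic group of order $n$, and for a divisor $d$ of $n$ let $K_d$ be the unique subgroup of $G$ with $H\le K_d\le G$ and $[K_d:H]=d$. For each coset $c$ of $H$ in $K_d$, let $a_c$ denote the number of $G$-conjugacy classes contained in $c$ whose centralizing subgroup is exactly $K_d$. The goal is to show $a_c$ is independent of $c$.

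The key device is Lemma~\ref{PL}: the number of $G$-conjugacy classes lying in a \emph{generating} coset of a cyclic quotient equals the number of $G$-classes inside the base subgroup that do not split on restriction. I would apply this lemma not to $G/H$ directly but to the quotient $K_e/K_f$ for each pair $f\mid e\mid n$, which is cyclic of order $e/f$; a class contained in $K_e$ has centralizing subgroup (with respect to $K_f$) equal to $K_e$ precisely when it is a non-splitting class for the $K_e$-on-$K_e$ action relative to $K_f$, and one checks this translates into a statement about which $\Delta_X$ occur. Counting classes in generating cosets of $K_e/K_f$ in two ways then yields a family of linear relations: the vector $(a_c)_{c}$ indexed by cosets of $H$ in $K_d$ satisfies a system whose coefficient matrix is built from the action of a generator of the cyclic group $K_d/H$ permuting the cosets cyclically. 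Concretely, summing $a_c$ over cosets $c$ lying in a fixed subgroup $K_{d'}$ (with $d'\mid d$) is constrained, and these constraints are exactly the condition that $(a_c)$ lies in a prescribed eigenspace of the cyclic-shift matrix of size $d$.

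Next I would exploit the tensor factorization. Writing $d=\prod p_i^{k_i}$, the cyclic group $\Z/d$ decomposes as a direct product of its Sylow subgroups, and the shift matrix on $d$ points factors as a Kronecker product of the shift matrices on $p_i^{k_i}$ points; correspondingly the relevant eigenspace factors. This reduces the problem to the case where $d=p^k$ is a prime power, i.e.\ $K_d/H$ is a cyclic $p$-group. In that case the linear system says: the coefficients $a_c$, viewed as a function on $\Z/p^k$, have the property that all its ``block sums'' at each level $p^j$ agree, and one shows directly that the only such functions are constant — equivalently, the matrix whose rows record the level-$p^j$ block-sum conditions has a one-dimensional kernel spanned by the all-ones vector, which forces $a_c$ constant. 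The result for general $d$ then follows by reassembling the tensor factors.

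The main obstacle, I expect, is the bookkeeping in the second step: setting up the linear relations so that they are simultaneously (a) genuinely a consequence of Lemma~\ref{PL} applied across all intermediate subgroups, (b) correctly indexed so that ``centralizing subgroup exactly $K_d$'' (an equality condition) rather than ``$K_d$ contained in the centralizing subgroup'' (an inequality) is what the $a_c$ count — this will require a Möbius-type inversion over the divisor lattice of $n$ — and (c) packaged as an eigenvector condition with a clean matrix. Once the matrix is identified and the prime-power reduction is in place, the final linear-algebra computation should be short. A secondary subtlety is checking that the notion of centralizing subgroup behaves well under changing the base subgroup from $H$ to an intermediate $K_f$, which is needed to invoke Lemma~\ref{PL} at each level.
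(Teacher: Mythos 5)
Your overall architecture matches the paper's: apply Lemma~\ref{PL} to every intermediate pair $K_f\le K_e$ to obtain linear relations, tensor-factorize over the prime decomposition of $n$, and finish with linear algebra in the prime-power case. But the proposal has a genuine gap exactly where the paper does all of its work. You assert that in the prime-power case ``one shows directly that the only such functions are constant,'' i.e.\ that the linear system has a kernel of the minimal possible dimension. This is not at all direct, and nothing in your plan indicates how to prove it. The unknowns cannot be taken to be the counts $a_c$ for a single fixed centralizing subgroup $K_d$: a $G$-class with centralizing subgroup $K_c$ contributes to the count of integral $(K_j,K_i)$-classes whenever $j\mid\lcm(i,c)$, weighted by the splitting factor $n/\lcm(j,c)$, so the relations coming from Lemma~\ref{PL} inseparably mix all the quantities $N_d^c$ for all pairs $d\mid c\mid n$; a M\"obius inversion over the divisor lattice does not disentangle them. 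The paper must therefore work with the full system $L_i^j=R_i^j$ in all $\tfrac{1}{2}(a+1)(a+2)$ variables (for $n=p^a$), prove $L$ is invertible (Proposition~\ref{Linvertible}), and then pin down the $1$-eigenspace of $RL^{-1}$ by exhibiting enough of the \emph{other} eigenvectors --- a $\binom{a}{2}$-dimensional kernel (Proposition~\ref{e0}) and an $a$-dimensional $(-1/p)$-eigenspace via explicitly constructed row vectors (Proposition~\ref{ep}) --- so that a dimension count forces the $1$-eigenspace to be exactly $(a+1)$-dimensional. Your plan contains no substitute for this step.

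Two further points. First, your claim that the constraints amount to ``$(a_c)$ lying in a prescribed eigenspace of the cyclic-shift matrix of size $d$'' is unjustified and appears to be wrong: the relations $L_i^j=R_i^j$ compare weighted class-counts over subgroups with those over generating cosets, and have no natural expression as an eigenvector condition for the regular shift on $\Z/d\Z$. Second, you omit the reduction (the paper's Lemma~\ref{TL}, via the power map $g\mapsto g^a$) showing that cosets of equal order in $G/H$ already carry equal numbers of classes of each centralizing subgroup; without it one cannot even index the unknowns by a representative coset per divisor, and the system you would have to control is larger still.
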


Before introducing the lemma which will be the principal tool in the proof of Theorem \ref{MT}, it
is convenient to make the following definition.
\begin{definition*}
We define an \emph{integral $(G,H)$-class} to be a conjugacy
class of $G$ whose centralizing subgroup with respect to $H$ is
$G$.
\end{definition*}
\noindent Equivalently, an integral $(G,H)$-class is one
which does not split when the conjugacy action is restricted to $H$.

\begin{lemma}\label{PL}
Suppose that the coset $Hx$ is a generator of the quotient group~$G/H$.
Then the number of conjugacy classes contained in $Hx$ is equal to the
number of integral~$(G,H)$-classes contained in $H$.
\end{lemma}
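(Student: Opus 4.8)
The plan is to prove Lemma~\ref{PL} in two steps: first to show that the $G$-conjugacy classes and the $H$-conjugacy classes contained in $Hx$ coincide, and then to count the latter by the orbit-counting lemma, matching the answer to the number of integral $(G,H)$-classes contained in $H$.

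For the first step, let $g\in Hx$. Since $g\in C_G(g)$, the image of $C_G(g)$ in $G/H$ contains the coset $Hg=Hx$, which by hypothesis generates $G/H$; hence that image is the whole of $G/H$, that is, $\Delta_g=HC_G(g)=G$. So $g$ lies in an integral $(G,H)$-class, and therefore, by the remark following the definition, $g^G$ does not split on restriction to $H$, i.e.\ $g^G=g^H$. Consequently the partition of the set $Hx$ into $G$-classes coincides with its partition into $H$-classes, and in particular these two partitions have the same number of parts.

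For the second step I would apply the orbit-counting lemma twice, arriving each time at the quantity $\frac{1}{|H|}\sum_{h\in H}|C_G(h)\cap Hx|$. On one hand, $H$ acts on the set $Hx$ by conjugation (the coset $Hx$ is visibly invariant), and the set of points of $Hx$ fixed by $h\in H$ is exactly $C_G(h)\cap Hx$; so the number of $H$-orbits on $Hx$---the number of $H$-classes contained in $Hx$---is $\frac{1}{|H|}\sum_{h\in H}|C_G(h)\cap Hx|$. On the other hand, $\{h\in H:\Delta_h=G\}$ is the disjoint union of the integral $(G,H)$-classes contained in $H$, and each such class $X$ equals $h^H$ for every $h\in X$, so $\sum_{h\in X}|h^H|^{-1}=1$; summing over all these classes, the number of integral $(G,H)$-classes contained in $H$ equals $\frac{1}{|H|}\sum_{h\in H,\ \Delta_h=G}|C_H(h)|$, where $h^H$ and $C_H(h)$ denote the conjugacy class and centralizer of $h$ in $H$. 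It then suffices to verify, for each $h\in H$, the dichotomy $|C_G(h)\cap Hx|=|C_H(h)|$ if $\Delta_h=G$ and $|C_G(h)\cap Hx|=0$ otherwise. This holds because $C_G(h)\cap Hx$ is the preimage of the single coset $Hx$ under the homomorphism $C_G(h)\to G/H$ with kernel $C_G(h)\cap H=C_H(h)$ and image $\Delta_h/H$: it is a coset of $C_H(h)$ when $Hx\in\Delta_h/H$ and empty otherwise, while $Hx\in\Delta_h/H$ is equivalent to $\Delta_h=G$ since $Hx$ generates the cyclic group $G/H$. The two counts therefore agree, and together with the first step this proves the lemma.

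I expect the only point needing care to be the reinterpretation of the number of integral $(G,H)$-classes contained in $H$ as the orbit-counting sum $\frac{1}{|H|}\sum_{h\in H}|C_G(h)\cap Hx|$; the dichotomy for $|C_G(h)\cap Hx|$ is precisely what makes this work, and with it in hand the remaining steps are routine. The degenerate case $|G/H|=1$, where $Hx=H$ and the assertion is immediate, requires no separate argument.
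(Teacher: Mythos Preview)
Your proof is correct and is essentially the same double-counting argument as the paper's: both pass through the quantity $\frac{1}{|H|}\sum_{h\in H}|C_G(h)\cap Hx|$, using the dichotomy on $|C_G(h)\cap Hx|$ on one side and the fact that $\Delta_g=G$ for $g\in Hx$ on the other. You organize it a little differently---isolating the observation that $G$-classes and $H$-classes in $Hx$ agree as a preliminary step and invoking the orbit-counting lemma explicitly---whereas the paper folds everything into a single chain of equalities, but the substance is identical.
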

\begin{proof} If $h\in H$ and $\Delta_h=G$, then $C_G(h)$ meets every coset of $H$ in $G$,
and in particular it meets $Hx$. Now the number of integral $(G,H)$-classes in $H$ is
\begin{eqnarray*}
\sum_{\stackrel{\scriptstyle{h\in H}}{\scriptstyle{\Delta_h=G}}}\frac{1}{|h^G|}
& = & \frac{1}{|G|}\sum_{\stackrel{\scriptstyle{h\in H}}{\scriptstyle{\Delta_h=G}}}
|C_G(h)| \\
& = & \frac{1}{|H|}\sum_{h\in H}
|C_{Hx}(h)| \\
& = & \frac{1}{|H|}\sum_{g \in Hx}|C_H(g)| \\
& = & \frac{1}{|G|}\sum_{g\in Hx}|C_G(g)| \\
& = & \sum_{g\in Hx}\frac{1}{|g^G|},
\end{eqnarray*}
which is the number of conjugacy classes in $Hx$. This establishes the lemma. \end{proof}

We now want to widen our focus in two respects: by including conjugacy classes whose
centralizing group is a proper subgroup of $G$, and by considering all of the cosets of
$H$ in $G$. The following lemma lays the foundations.

\vbox{
\begin{lemma}\label{TL}
Suppose that $Hx$ and $Hy$ have equal order in the quotient
group $G/H$. Then there exists a permutation $\sigma$ of the elements
of $G$ such that
\begin{enumerate}
\item every subgroup of $G$ is $\sigma$-invariant,
\item $g_1 \sigma\, g_2 \sigma = g_2 \sigma\, g_1 \sigma$ if and only if $g_1g_2=g_2g_1$,
\item $\sigma$ permutes the conjugacy classes of $G$,
\item $(Hx)\sigma=Hy$.
\end{enumerate}
\end{lemma}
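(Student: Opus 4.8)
The plan is to realise $\sigma$ as a \emph{power map}. Precisely, I will look for an integer~$r$, coprime to~$|G|$, such that the coset power $(Hx)^{r}$ equals~$Hy$, and then set $g\sigma=g^{r}$ for every $g\in G$. The merit of this choice is that properties (i)--(iii) then come essentially for free: if $\gcd(r,|G|)=1$ then, choosing~$s$ with $rs\equiv 1$ modulo the exponent of~$G$, the map $g\mapsto g^{s}$ is a two-sided inverse of~$\sigma$, so $\sigma$ is a permutation of~$G$, and both $\sigma$ and $\sigma^{-1}$ preserve each subgroup of~$G$. All the real work is in arranging property~(iv) simultaneously with coprimality.

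For that, the only group-theoretic observation needed is that, since $G/H$ is cyclic and $Hx$, $Hy$ have a common order~$d$, they are both generators of the unique subgroup of order~$d$ of~$G/H$; hence $Hy=(Hx)^{r_{0}}$ for some~$r_{0}$ with $\gcd(r_{0},d)=1$. The step I expect to need the most care is the arithmetic one: promoting~$r_{0}$ to an~$r$ with $r\equiv r_{0}\pmod{d}$ and $\gcd(r,|G|)=1$. This is a short application of the Chinese Remainder Theorem---for a prime $p\mid d$ the congruence already forces $p\nmid r$, and for each remaining prime $p\mid|G|$ one merely imposes $r\not\equiv 0\pmod{p}$; alternatively one may quote Dirichlet and take~$r$ to be a large prime in the progression $r_{0}+d\Z$. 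Since $d$ divides the exponent of~$G/H$, and hence that of~$G$, the congruence survives passage to the quotient, so $(Hx)^{r}=(Hx)^{r_{0}}=Hy$.

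It then remains only to verify the four properties for $\sigma\colon g\mapsto g^{r}$. Every subgroup~$S$ is closed under both $g\mapsto g^{r}$ and its inverse $g\mapsto g^{s}$, so $S\sigma=S$, which is~(i). For~(ii): $g_{1}g_{2}=g_{2}g_{1}$ trivially gives $g_{1}^{r}g_{2}^{r}=g_{2}^{r}g_{1}^{r}$, and conversely, as $g_{i}=(g_{i}^{r})^{s}$ is a power of~$g_{i}^{r}$, commutativity of $g_{1}^{r}$ and $g_{2}^{r}$ forces that of~$g_{1}$ and~$g_{2}$. For~(iii): from $(aga^{-1})^{r}=ag^{r}a^{-1}$ the map~$\sigma$ carries each conjugacy class into one, and being a bijection it permutes the conjugacy classes (indeed $C_{G}(g)=C_{G}(g^{r})$, so each class maps onto a class of the same size). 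Finally, for $g\in Hx$ we have $Hg^{r}=(Hg)^{r}=(Hx)^{r}=Hy$, so $(Hx)\sigma\subseteq Hy$; since $\sigma$ is injective and $|Hx|=|Hy|$, this inclusion is an equality, which is~(iv). Thus the main obstacle, such as it is, lies entirely in the number-theoretic choice of~$r$; the rest is direct verification.
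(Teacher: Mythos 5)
Your proof is correct and follows essentially the same route as the paper: both realise $\sigma$ as a power map $g\mapsto g^a$ with $(Hx)^a = Hy$ and $a$ invertible modulo $|G|$. You merely spell out the Chinese Remainder Theorem step and the verification of (i)--(iv), which the paper leaves as "clearly".
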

\begin{proof} Since $Hx$ and $Hy$ have equal order in $G/H$, there exists
an integer $a$, coprime with~$|G/H|$, such that $(Hx)^a\subseteq Hy$.
Now we may suppose that $a$ is also coprime with $|H|$, and hence that
$a$ is invertible modulo $|G|$. Therefore the map $g\mapsto g^a$
is a permutation of $G$, and clearly has the properties claimed in the lemma. \end{proof}}

The number of conjugacy classes with a particular centralizing subgroup is equal in
cosets of equal order in~$G/H$, since a conjugacy class in~$Hx$ is mapped by~$\sigma$
to a conjugacy class in~$Hy$, while the centralizing subgroup is invariant.
It follows that we lose nothing by selecting a representative coset of each
order in $G/H$. The following definition takes advantage of this fact.

\vbox{
\begin{definition*}Let $n = |G / H |$.
\begin{enumerate}
\item For a divisor $d$ of $n$, we define $K_d$
to be the unique subgroup of $G$ which contains $H$ as a subgroup
of index $d$.
\item For a divisor $d$ of $n$, we define
$\Gamma_d$ to be a representative coset of order $d$ in $G/H$. (So
$K_d$ is the subgroup generated by $\Gamma_d$.)
\item If $c|n$ and $d|c$, then we define
$N_d^c$ to be the number of conjugacy classes in~$\Gamma_d$ whose
centralizing subgroup is~$K_c$.
\end{enumerate}
\end{definition*}}

In terms of this new notation, we may restate Theorem \ref{MT} simply as follows.
\begin{theorem*}
Suppose that $c|n$ and $d|c$. Then $N_d^c=N_1^c$.
\end{theorem*}

\section{Reduction to linear algebra}

By multiple applications of Lemma \ref{PL} we derive a set of linear equations in the quantities~$N_d^c$,
allowing us to reduce the proof of Theorem \ref{MT} to a problem of linear algebra. To obtain the linear
equations, we need for each particular choice of~$(i,j)$, to express the following numbers in terms of the
quantities $N_d^c$:
\begin{enumerate}
\item the number $L_i^j$ of integral $(K_j, K_i)$-classes in $K_i$,
\item the number $R_i^j$ of integral $(K_j, K_i)$-classes in a generating coset of $K_j/K_i$.
\end{enumerate}
Lemma \ref{PL} tells us that these numbers are equal.

We first handle the quantity $L_i^j$. It is straightforward to identify the $G$-conjugacy classes of~$K_i$ whose
centralizing subgroups with respect to~$K_i$, contain $K_j$. But it is necessary to allow for the fact that
these classes may split when the conjugacy action is restricted to $K_j$. In fact, a $G$-conjugacy class in
$K_i$ with centralizing subgroup $K_c$ splits into $n/\lcm(j,c)$ classes under the action of~$K_j$. If $d$ is a
divisor of $i$, then $K_i$ contains $\phi(d)$ cosets of $H$ whose order in $G/H$ is $d$, where $\phi$ is Euler's
totient function; one of these is our representative coset $\Gamma_d$. Suppose that an element of $\Gamma_d$ has
centralizing subgroup $K_c$ with respect to $H$. Then its centralizing subgroup with respect to $K_i$ is
$K_{\lcm(i,c)}$. So the quantity $L_i^j$ is given by the formula
\begin{equation}\label{L}
L_i^j=\sum_{d|i}\phi(d)\sum_{\stackrel{\stackrel{\scriptstyle{c|n}}{\scriptstyle{d|c}}}{\scriptstyle{j|\lcm(i,c)}}}
\frac{n}{\lcm(j,c)}N_d^c.
\end{equation}

To find $R_i^j$ in terms of the quantities $N_d^c$, we need to describe a generating coset
of~$K_j/K_i$
as a union of cosets of~$H$. This requires us to look at the
arithmetic of $i$ and $j$ a little more closely.
\begin{lemma}\label{GCL}
Suppose that $i|j$, and let $i=uv$, where $v$ is the largest divisor
of $i$ coprime with~$j/i$. Suppose that $C_i$ is the cyclic subgroup of order $i$ inside a cyclic
group $C_j$ of order $j$. Then
\begin{enumerate}
\item any element of a generating coset of $C_j/C_i$ has order divisible by $j/v$,
\item if $d|v$, then the number of elements of order $jd/v$ in a generating coset is $u\phi(d)$.
\end{enumerate}
\end{lemma}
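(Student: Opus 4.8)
The plan is to argue in additive notation. Identify $C_j$ with $\mathbb{Z}/j\mathbb{Z}$ and put $m = j/i$, so that $C_i$ is the unique subgroup of order $i$, namely the subgroup of multiples of $m$. Since reduction modulo $m$ identifies $C_j/C_i$ with $\mathbb{Z}/m\mathbb{Z}$, a coset $x + C_i$ is a generating coset of $C_j/C_i$ precisely when $\gcd(x,m) = 1$, and then its elements are the $i$ residues $x + km$ for $k = 0,1,\dots,i-1$. As the order of $x+km$ in $C_j$ is $j/\gcd(x+km,\,j)$, both parts of the lemma come down to understanding the multiset of values $\gcd(x+km,\,j)$ as $k$ runs over a complete residue system modulo $i$; and since this analysis will use nothing about $x$ beyond $\gcd(x,m)=1$, it will automatically be independent of the choice of generating coset.

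The crucial point is that this greatest common divisor can be computed prime by prime, and the primes dividing $j$ fall into two classes. If $p \mid m$, then $p \nmid x$ because $\gcd(x,m)=1$, whereas $p \mid km$, so $p \nmid x+km$; such primes contribute nothing to $\gcd(x+km,\,j)$. The remaining primes dividing $j$ are exactly those dividing $v$: each divides $i$ but not $m$, so for such a prime $v_p(j) = v_p(i) = v_p(v)$. Multiplying the contributions of the first class gives $j/v = \prod_{p\mid m} p^{v_p(j)}$, and this divides $j/\gcd(x+km,\,j)$ for every $k$, which is part~(i). Combining both classes shows that $\gcd(x+km,\,j) = \gcd(x+km,\,v)$, a quantity depending only on the residue of $x+km$ modulo $v$.

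For part~(ii), write $i = uv$. As $k$ runs over $\{0,1,\dots,i-1\}$ its residue modulo $v$ takes each value exactly $u$ times, and since $\gcd(m,v) = 1$ multiplication by $m$ permutes the residues modulo $v$, so $x+km$ likewise takes each residue modulo $v$ exactly $u$ times. Hence for each divisor $e$ of $v$ the number of $k$ with $\gcd(x+km,\,v) = e$ equals $u$ times the number of residues $r$ modulo $v$ with $\gcd(r,v)=e$, that is, $u\,\phi(v/e)$; the corresponding elements have order $j/e$. Taking $e = v/d$, which is legitimate since $d \mid v$, rewrites this as the assertion that exactly $u\,\phi(d)$ elements of the generating coset have order $jd/v$, completing part~(ii); as a check, summing over $d\mid v$ gives $u\sum_{d\mid v}\phi(d) = uv = i$, the size of the coset. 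The whole argument is elementary, and the one place demanding care will be the bookkeeping around the definition of $v$: one must verify that the prime divisors of $j$ split cleanly into those dividing $m$, which are annihilated in $\gcd(x+km,\,j)$, and those dividing $v$, which survive unchanged, so that $\gcd(x+km,\,j)$ collapses to $\gcd(x+km,\,v)$. With that in hand both statements drop out of the counting.
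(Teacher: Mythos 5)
Your proof is correct. The underlying arithmetic fact is the same as in the paper -- the primes dividing $j$ split into those dividing $j/i$, whose full power in $j$ must be carried by every element of a generating coset, and those dividing $v$, for which $v_p(j)=v_p(i)=v_p(v)$ -- but your counting argument for part~(ii) is genuinely different. The paper characterizes the relevant elements by the condition $\lcm(i,k)=j$ on the order $k$, counts all $\phi(k)$ elements of order $k$ in $C_j$, and divides by the number $\phi(j/i)$ of generating cosets, then evaluates $\phi(k)/\phi(j/i)=\phi(d)\phi(j/v)/\phi(j/i)=u\phi(d)$; this tacitly assumes that elements of a given order are equidistributed among the generating cosets (which is true, since $\Aut(C_j)$ preserves orders and acts transitively on the generating cosets of the characteristic subgroup $C_i$, but the paper does not say so). You instead fix an arbitrary generating coset $x+C_i$, reduce $\gcd(x+km,j)$ to $\gcd(x+km,v)$, and count residues modulo $v$ directly, so equidistribution is a conclusion rather than a hidden hypothesis. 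Your version is slightly longer but more self-contained; the paper's is shorter at the cost of that unstated symmetry argument. Your closing consistency check $\sum_{d\mid v}u\phi(d)=uv=i$ is a nice touch.
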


\begin{proof}
An element of order $k$ in $C_j$ is contained in a generating coset of $C_j/C_i$ if and only if $\lcm(i,k)=j$.
Suppose that this is the case. If $p$ is a prime divisor of $j$ which does not divide~$v$, then~$p^a$, the
highest power of~$p$ dividing~$j$, is strictly greater than the highest power of $p$ dividing $i$, and so $p^a$
must divide~$k$. Since no prime divisor of $j/v$ can divide $v$, it follows that~$j/v$ divides~$k$.

The number of elements of~$C_j$ with order~$k$ is~$\phi(k)$,
and the number of generating cosets of~$C_i$ in $C_j$ is $\phi(j/i)$, and so the number of
elements of order~$k$ in each such coset is $\phi(k)/\phi(j/i)$. Suppose that $k=dj/v$ where~$d$ is a divisor of $v$; then since $v$ is coprime with $j/v$ we see that
\[ \phi(k)/\phi(j/i)  = \phi(d)\phi(j/v)/\phi(j/i). \]
Now the prime divisors of $j/v$ are precisely the prime divisors of $j/i$.
Since $\phi(p^r)/\phi(p^s)=p^{r-s}$ for any prime $p$ and positive
integers $r \ge s$, it follows that $\phi(j/v)/\phi(j/i)=i/v=u$, which gives the required result.
\end{proof}

Now arguing similarly to the calculation of $L_i^j$ above, we find that
\[
R_i^j=\sum_{d|v}u\phi(d)\sum_{\stackrel{\stackrel{\scriptstyle{c|n}}{\scriptstyle{jd/v|c}}}{\scriptstyle{j|\lcm(i,c)}}}
\frac{n}{\lcm(j,c)}N_{jd/v}^c,
\]
where $u$ and $v$ are as in Lemma \ref{GCL}.
The condition that $j | \lcm(i,c)$ allows us to 
replace the condition that $jd/v | c$ with the simpler
condition that $d | c$. Hence
\begin{equation}\label{R}
R_i^j=\sum_{d|v}u\phi(d)\sum_{\stackrel{\stackrel{\scriptstyle{c|n}}{\scriptstyle{d|c}}}{\scriptstyle{j|\lcm(i,c)}}}
\frac{n}{\lcm(j,c)}N_{jd/v}^c,
\end{equation}
where $v$ is the greatest divisor of $i$ coprime with $j/i$, and $u=i/v$.

Lemma \ref{PL} gives us the following linear equation.
\begin{equation}\label{Omega}
\Omega_i^j:  \  L_i^j=R_i^j.
\end{equation}
Notice that when $j=i$, we have $v=i$ and $u=1$, and it is not hard to see that the
equation~$\Omega_i^j$ becomes trivial since the two sides are identical. If $j\neq i$ then~$\Omega_i^j$ is
non-trivial. 

We next make an observation which relates the set of equations $\{\Omega_i^j\}$ to the statement of
Theorem \ref{MT}.
\begin{proposition}\label{e1}
The linear equation $\Omega_i^j$ is satisfied if
$N_d^c=N_1^c$ for all $c|n$ and $d|c$.
\end{proposition}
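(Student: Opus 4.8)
The plan is to substitute $N_d^c=N_1^c$ directly into the two formulae (\ref{L}) and (\ref{R}) for $L_i^j$ and $R_i^j$, and to verify that the two expressions obtained are identical; since $\Omega_i^j$ is precisely the equation $L_i^j=R_i^j$, this gives the proposition. The point of the substitution is that $N_d^c$ gets replaced by a quantity depending only on $c$, so that inside each double sum all the dependence on $d$ is confined to an inner sum of the form $\sum_{d}\phi(d)$, which collapses by Gauss's identity $\sum_{d\mid m}\phi(d)=m$.

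First I would treat $L_i^j$. After the substitution, interchanging the order of summation in (\ref{L}) gives
\[
L_i^j=\sum_{\substack{c\mid n\\ j\mid\lcm(i,c)}}\frac{n}{\lcm(j,c)}\,N_1^c\sum_{d\mid\gcd(i,c)}\phi(d)
=\sum_{\substack{c\mid n\\ j\mid\lcm(i,c)}}\frac{n\,\gcd(i,c)}{\lcm(j,c)}\,N_1^c,
\]
the inner sum being over those $d$ dividing both $i$ and $c$, that is, dividing $\gcd(i,c)$. Doing the same with (\ref{R}), where the analogous inner sum is over $d\mid\gcd(v,c)$ and carries the extra constant factor $u$, yields
\[
R_i^j=\sum_{\substack{c\mid n\\ j\mid\lcm(i,c)}}\frac{n}{\lcm(j,c)}\,u\,N_1^c\sum_{d\mid\gcd(v,c)}\phi(d)
=\sum_{\substack{c\mid n\\ j\mid\lcm(i,c)}}\frac{n\,u\,\gcd(v,c)}{\lcm(j,c)}\,N_1^c.
\]
Both sums now run over exactly the same set of values of $c$ (in each case the value $d=1$ shows the inner sum is non-empty whenever $c\mid n$ and $j\mid\lcm(i,c)$), so it is enough to prove that $\gcd(i,c)=u\,\gcd(v,c)$ for every $c\mid n$ with $j\mid\lcm(i,c)$.

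This identity is where the definitions $i=uv$, with $v$ the largest divisor of $i$ coprime to $j/i$, come into play, and verifying it is the only step that needs genuine thought; it can be done one prime at a time. Fix a prime $p$ and let $\alpha,\beta$ be the exponents of $p$ in $i$ and $j$; since $i\mid j$ we have $\beta\ge\alpha$. If $\beta=\alpha$ then $p\nmid j/i$, so the whole $p$-part of $i$ is absorbed into $v$ and $u$ is coprime to $p$; hence $i$ and $v$ have the same $p$-part, so $\gcd(i,c)$ and $\gcd(v,c)$ do too, while the factor $u$ contributes nothing at $p$. If $\beta>\alpha$ then $p\mid j/i$, so $v$ is coprime to $p$ while the full $p$-part $p^\alpha$ of $i$ lies in $u$; here the hypothesis $j\mid\lcm(i,c)$ forces the exponent of $p$ in $c$ to be at least $\beta$, hence at least $\alpha$, so $\gcd(i,c)$ has $p$-part $p^\alpha$, which is exactly the $p$-part of $u$, while $\gcd(v,c)$ has no factor of $p$. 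In both cases the exponents of $p$ on the two sides agree, so the identity holds, and with it the proposition. It is reassuring that the hypothesis $j\mid\lcm(i,c)$ is used exactly once, in the second case, since that condition is precisely what defines the range of $c$; I expect organising this little case analysis to be the main obstacle, everything else being bookkeeping.
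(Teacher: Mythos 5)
Your proof is correct, and it follows the same basic strategy as the paper: substitute $N_d^c=N_1^c$ and collapse the totient sums using $\sum_{d\mid m}\phi(d)=m$. In fact your version is more careful than the paper's. The paper justifies the resulting identity by the single observation $\sum_{d\mid i}\phi(d)=i=uv=\sum_{d\mid v}u\phi(d)$, which on its face ignores the fact that the inner sum over $c$ depends on $d$ through the condition $d\mid c$; your interchange of the order of summation shows that what is really needed is the identity $\gcd(i,c)=u\gcd(v,c)$ for each $c$ with $j\mid\lcm(i,c)$, and your prime-by-prime verification of that identity (using the constraint $j\mid\lcm(i,c)$ exactly where it is needed, in the case where $p$ divides $j/i$) is the missing bookkeeping that makes the paper's one-line justification rigorous. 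No gaps.
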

\begin{proof*} The equation becomes
\[
\sum_{d|i}\phi(d)\sum_{\stackrel{\stackrel{\scriptstyle{c|n}}{\scriptstyle{d|c}}}{\scriptstyle{j|\lcm(i,c)}}}
\frac{n}{\lcm(j,c)}N_1^c =\sum_{d|v}u\phi(d)\sum_{\stackrel{\stackrel{\scriptstyle{c|n}}{\scriptstyle{d|c}}}
{\scriptstyle{j|\lcm(i,c)}}}\frac{n}{\lcm(j,c)}N_{1}^c,
\]
which is satisfied since
\[
\hskip 1.8in \sum_{d | i} \phi(d) = i = uv = \sum_{d | v} u \phi(d). \esinglebox
\]
\end{proof*}
\begin{definition*}
We define $L$ to be the matrix with rows indexed by pairs from $\{(i,j):j|n,\ i|j\}$ and columns by pairs from
$\{(d,c):c|n,\ d|c\}$, such that each entry $L_{(i,j)}^{(d,c)}$ is the coefficient of~$N_d^c$ in~$L_i^j$.
Similarly, we define $R$ to be the matrix whose entry $R_{(i,j)}^{(d,c)}$ is the coefficient of~$N_d^c$ in~$R_i^j$.
\end{definition*}
Proposition \ref{e1} gives us a subspace of the kernel of $L-R$ whose dimension is the number of divisors
$\tau(n)$ of $n$. To establish Theorem~\ref{MT} it will suffice to show that this is in fact the full kernel.

\begin{proposition}\label{Linvertible}
The matrix $L$ is invertible.
\end{proposition}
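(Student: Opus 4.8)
The plan is to exhibit a block-triangular structure on $L$ by ordering both the row-index pairs $(i,j)$ and the column-index pairs $(d,c)$ appropriately, so that invertibility reduces to invertibility of the diagonal blocks. Looking at formula \eqref{L}, the entry $L_{(i,j)}^{(d,c)}$ is nonzero only when $d\mid i$, $d\mid c$, and $j\mid\lcm(i,c)$; in particular it forces $d\mid i$. So the natural first move is to order the pairs by the first coordinate: group the rows $(i,j)$ by the value of $i$, and the columns $(d,c)$ by the value of $d$. Since $L_{(i,j)}^{(d,c)}=0$ unless $d\mid i$, if we list the values of $i$ (resp. $d$) in an order refining divisibility (larger divisors first, say), the matrix becomes block lower-triangular with respect to this grouping. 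It then suffices to show each diagonal block --- the submatrix with $i=d$ fixed, rows indexed by $j$ with $i\mid j\mid n$ and columns by $c$ with $i\mid c\mid n$ --- is invertible.

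For a fixed value $i=d$, the diagonal block has entry indexed by $(j,c)$ equal to $\phi(i)\cdot\frac{n}{\lcm(j,c)}$ when $j\mid\lcm(i,c)$, i.e.\ when $j\mid c$ (since $i\mid j$, so $\lcm(i,c)=c$ once $i\mid c$; here $i\mid c$ because $d\mid c$ and $d=i$), and $0$ otherwise. So up to the nonzero scalar $\phi(i)$, the block is the matrix with $(j,c)$-entry $\frac{n}{\lcm(j,c)}=\frac{n}{c}[\,j\mid c\,]$ --- or more precisely we should only pick up the $d\mid c$ contributions, but there could be several terms from the outer sum over $d$ that land in the same column; I would need to check that for the \emph{diagonal} block only the $d=i$ term of the outer sum contributes, which holds because a row $(i,j)$ of the diagonal block only sees columns $(i,c)$, and the outer-sum term with parameter $d'$ contributes to column $(i,c)$ only if $d'=i$. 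Thus the diagonal block is $\phi(i)$ times the matrix $M$ with $M_{j,c}=\frac{n}{c}$ if $j\mid c$ and $0$ otherwise, where $j,c$ range over multiples of $i$ dividing $n$. This $M$ is itself triangular with respect to divisibility (order the multiples of $i$ so that divisors come before multiples): $M_{j,c}=0$ unless $j\mid c$, and the diagonal entries $M_{c,c}=\frac{n}{c}$ are all nonzero. Hence each diagonal block is invertible, and therefore $L$ is invertible.

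The step I expect to be the main obstacle is the bookkeeping around the outer sum over $d$ in \eqref{L}: one must be careful that when restricting to a diagonal block (fixed first coordinate on rows and columns), the contributions genuinely collapse to a single value of the summation variable, and that the divisibility conditions $d\mid c$ and $j\mid\lcm(i,c)$ simplify as claimed. Once the correct total order on divisors (refining divisibility, with divisors listed before their multiples) is fixed and used consistently for both the block decomposition and the within-block triangulation, the rest is the routine observation that a triangular matrix with nonzero diagonal is invertible. I would also remark that an entirely parallel analysis is \emph{not} available for $R$ --- this is precisely why the proof singles out $L$ rather than $R$ --- and that combined with Proposition \ref{e1} and the dimension count $\tau(n)$, the invertibility of $L$ will let us pin down $\ker(L-R)$ exactly in the subsequent argument.
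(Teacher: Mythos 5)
Your proof is correct and is essentially the paper's own argument in different clothing: the paper expands $\det L$ over permutations and uses the same two vanishing conditions ($d\mid i$, and then $j\mid c$ once the first coordinates agree) to show that only the identity permutation contributes, which is exactly your (block-)triangularity claim with the same nonzero diagonal entries $\phi(i)n/j$. The only quibble is the harmless slip that listing larger divisors first makes the matrix block \emph{upper}- rather than lower-triangular; the bookkeeping you flag as a potential obstacle does go through as you describe.
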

\begin{proof} We show that $L$ has non-zero determinant. Let $S$ be the group of permutations of the set
$\{(i,j):j|n,\ i|j\}$. Then
\begin{equation}\label{detL}
\det L=\sum_{\sigma\in S}\sgn(\sigma)\prod_{(i,j)}L_{(i,j)}^{(i,j)\sigma},
\end{equation}
where $\sgn(\sigma)$ is the sign of $\sigma$. Now let $\sigma$ be a particular permutation, and consider the cycle
\[
(i,j)=(i_0,j_0)\stackrel{\sigma}{\displaystyle{\longmapsto}}(i_1,j_1)\stackrel{\sigma}{\displaystyle{\longmapsto}}
\cdots\stackrel{\sigma}{\displaystyle{\longmapsto}}(i_t,j_t)=(i,j).
\]
Suppose that $\sigma$ contributes non-trivially to the sum (\ref{detL}). Then the product
\[
\prod_{k=0}^{t-1}L_{(i_k,j_k)}^{(i_{k+1},j_{k+1})}
\]
must be non-zero. We see from the definition (\ref{L}) of $L_i^j$ that $L_{(i,j)}^{(d,c)}=0$
unless $d|i$.
So $i_{k+1}|i_k$ for all $k$, and it clearly follows that $i_0=i_1=\cdots=i_t=i$. Furthermore,
we see that $L_{(i,j)}^{(d,c)}=0$ unless $d|c$ and $j|\lcm(i,c)$. Now since we have shown
that the values $i_k$ in our cycle are equal, it follows that we require
$i_k|j_{k+1}$ for all $k$, and hence that $\lcm(i_k,j_{k+1})=j_{k+1}$. We therefore see that
$j_k|j_{k+1}$ for all $k$, and so $j_0=j_1=\cdots=j_t=j$. Clearly this implies that $\sigma$ is the
identity permutation, and hence that
\[
\det L=\prod_{(i,j)}L_{(i,j)}^{(i,j)}.
\]
Now we see from (\ref{L}) that the coefficient of $N_i^j$ in $L_i^j$ is $\phi(i)n/j$, and
it follows that $\det L$ is non-zero.\end{proof}

The nullity of the matrix $L-R$ is equal to that of $I-RL^{-1}$, and hence to the multiplicity of~$1$ as an
eigenvalue of $RL^{-1}$. By establishing lower bounds for the dimensions of the eigenspaces of $RL^{-1}$ for its
other eigenvalues, we shall establish $\tau(n)$ as an upper bound for this dimension; this will suffice to prove
Theorem \ref{MT}. In fact we shall eventually establish the following result, which characterizes the matrix
$RL^{-1}$ completely.
\begin{lemma}\label{RLinv}
$RL^{-1}$ is diagonalizable, and its characteristic polynomial is
\[
\prod_{d|n}\left(x-\frac{\mu(d)}{d}\right)^{\tau(n/d)}\!\!,
\]
where $\mu$ is the M\"obius function.
\end{lemma}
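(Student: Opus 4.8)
The plan is to prove Lemma~\ref{RLinv} by reducing, via a Kronecker (tensor) factorization of $L$ and $R$, to the case in which $n$ is a prime power, and then analysing $RL^{-1}$ directly in that case.

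First I would set up the factorization. Write $n=\prod_{p\mid n}p^{a_p}$. The Chinese Remainder Theorem identifies the index set $\{(d,c):c\mid n,\ d\mid c\}$ with the product over $p\mid n$ of the analogous index sets for $p^{a_p}$, by sending a divisor to the tuple of its prime-power parts. Since $\phi$, the operation $\lcm$, and the auxiliary quantities $u$ and $v$ of \eqref{L} and \eqref{R} are all multiplicative in the relevant sense, one checks that under this identification $L=\bigotimes_{p\mid n}L^{(p)}$ and $R=\bigotimes_{p\mid n}R^{(p)}$, where $L^{(p)},R^{(p)}$ are the matrices produced by \eqref{L} and \eqref{R} for the group of order $p^{a_p}$; the only mildly delicate point is that the condition $j\mid\lcm(i,c)$ occurring in both formulas splits as a conjunction over the primes dividing~$n$. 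Hence $RL^{-1}=\bigotimes_{p\mid n}R^{(p)}(L^{(p)})^{-1}$, and this reduces the lemma to the prime-power case: the characteristic polynomial of a Kronecker product of matrices is $\prod_{(\lambda_p)}\bigl(x-\prod_p\lambda_p\bigr)$, the product taken over all tuples $(\lambda_p)$ of eigenvalues of the factors counted with multiplicity; a Kronecker product of diagonalizable matrices is diagonalizable; and, as $\mu$ and $d\mapsto 1/d$ are multiplicative while each $d\mid n$ corresponds to a unique tuple of prime-power divisors, the eigenvalues $\mu(p^k)/p^k$ (of multiplicity $\tau(p^{a_p-k})$) of the factors combine to give, for each $d\mid n$, the eigenvalue $\mu(d)/d$ of multiplicity $\tau(n/d)$ --- which is exactly the claimed characteristic polynomial.

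For the prime-power case $n=p^a$ the index set is the chain $1\mid p\mid\cdots\mid p^a$, and \eqref{L} and \eqref{R} collapse to closed forms: for a row $(p^s,p^t)$ with $s<t$ one has $v=1$ and $u=p^s$, so the $(p^{s'},p^{t'})$-entry of $L$ is $\phi(p^{s'})p^{a-t'}$ when $s'\le s$ and $t'\ge t$, the corresponding entry of $R$ is $p^{a+s-t'}$ when $s'=t$ and $t'\ge t$, and the rows $(p^t,p^t)$ of $L$ and $R$ coincide (so those rows of $L-R$ vanish, as Proposition~\ref{e1} predicts when $j=i$). I would then extract the eigendata of $RL^{-1}$ in two steps. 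The multiplicity of the eigenvalue~$1$ equals $\dim\ker(L-R)$; restricting $L-R$ to the columns $(p^{s'},p^{t'})$ with $s'<t'$ and ordering rows and columns by the exponent $t$ (respectively $t'$) in decreasing order makes this square submatrix block lower-triangular, the diagonal block for a given $t$ being (up to the scalar $p^{a-t}$) the lower-triangular matrix with $(s,s')$-entry $\phi(p^{s'})$ for $s'\le s$ and $0$ otherwise, where $0\le s,s'\le t-1$; its diagonal entries $\phi(p^0),\phi(p),\dots,\phi(p^{t-1})$ are nonzero, so it is invertible, whence $L-R$ has full row rank and $\dim\ker(L-R)=\tau(p^a)=a+1$. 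For the remaining eigenvalues I would verify the polynomial identity $RL^{-1}(RL^{-1}-I)(RL^{-1}+\tfrac1p I)=0$ (the factor $RL^{-1}$ may be dropped when $a\le 1$), which shows $RL^{-1}$ to be diagonalizable with every eigenvalue in $\{1,-1/p,0\}$; together with the value $\mathrm{tr}(RL^{-1})=a+1-a/p$ and the multiplicity $a+1$ of the eigenvalue $1$ just obtained, a short count forces the multiplicities of $-1/p$ and $0$ to be $a$ and $\binom a2$. Thus the characteristic polynomial is $(x-1)^{a+1}(x+1/p)^a\,x^{a(a-1)/2}=\prod_{k=0}^{a}\bigl(x-\mu(p^k)/p^k\bigr)^{\tau(p^{a-k})}$, which is the prime-power instance of the lemma.

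The heart of the argument, and the main obstacle, is this prime-power analysis. The multiplicativity bookkeeping underlying the Kronecker factorization is routine but fiddly --- the behaviour of $v$ and of the condition $j\mid\lcm(i,c)$ need care --- and in the prime-power case one must arrange the explicit matrices $L$ and $R$ so as to read off cleanly the rank of $L-R$, an annihilating polynomial for $RL^{-1}$, and its trace. An alternative to the annihilator-and-trace route would be to produce explicit eigenvectors of $RL^{-1}$ for the eigenvalues $-1/p$ and $0$; the telescoping identity $\sum_{s'\le s}\phi(p^{s'})=p^s$ (already used implicitly in Proposition~\ref{e1}) points towards a convenient change of coordinates along the chain. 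Either way, once the prime-power case is in hand the general case follows formally from the Kronecker factorization.
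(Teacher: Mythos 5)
Your reduction to the prime-power case via the Kronecker factorization is exactly the paper's Section~\ref{reduction} argument (the paper peels off one coprime factor at a time, but it is the same thing), and your explicit forms of $L$ and $R$ for $n=p^a$, together with the block-triangular rank computation showing $\dim\ker(L-R)=a+1$, are correct; that last step is in fact a nice alternative to the paper's Proposition~\ref{e1}, since it pins down the multiplicity of the eigenvalue $1$ exactly rather than only bounding it below. The gap is in the other two ingredients you rely on: the annihilating polynomial $RL^{-1}(RL^{-1}-I)(RL^{-1}+\tfrac{1}{p}I)=0$ and the value $\mathrm{tr}(RL^{-1})=a+1-a/p$. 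Both are true, but both encode essentially full knowledge of $RL^{-1}$, and you give no route to either: a head-on verification amounts to computing $L^{-1}$, or products such as $RL^{-1}(R-L)L^{-1}(pR+L)$, none of which is carried out and none of which is appreciably easier than the lemma itself. As written, the crux of the prime-power case is asserted rather than proved.

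The paper avoids inverting $L$ altogether. It lower-bounds the three eigenspaces separately: $\dim\ker R\ge\binom{a}{2}$ because, in your notation, the row $R_{p^s}^{p^t}$ is $p^{s}$ times the row $R_{1}^{p^t}$, so every such row with $s\ge 1$ is redundant (Proposition~\ref{e0}); and $\dim\ker(pR+L)\ge a$ by exhibiting $a$ explicit independent row vectors $w^b$, each supported on the three index pairs $(p^b,p^b)$, $(p^b,p^{b+1})$, $(p^{b+1},p^{b+1})$, which annihilate $pR+L$ (Proposition~\ref{ep}). Together with the $(a+1)$-dimensional $1$-eigenspace these bounds sum to $\binom{a+2}{2}$, the full dimension, which forces equality everywhere and delivers diagonalizability and the characteristic polynomial in one stroke. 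This is precisely the ``explicit eigenvectors'' alternative you mention in your closing paragraph; to complete your proof you would need to carry it out, or else find some independent certification of the annihilator and the trace.
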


\section{Reduction to the prime-power case}\label{reduction}
To prove Lemma \ref{RLinv}, it will first be necessary to reduce the problem to the case where $n$ is a power of
a prime; we do this by means of a tensor factorization. For a given integer $n$, let
\[
\mathcal{A}(n)=\{A_x^y(n) :y|n,\ x|y\}
\]
 be a set of variables. Then if $r$ and $s$ are coprime
integers, we may identify the set $\mathcal{A}(rs)$ with the tensor product $\mathcal{A}(r)\otimes
\mathcal{A}(s)$ by identifying $A_x^y(r)\otimes A_w^z(s)$ with $A_{xw}^{yz}(rs)$.

Now our notation $N_{i}^{j}$ involves an implicit argument $n$. We shall
consider the expression~$X_i^j(n)$ obtained from~$L_i^j$ (given explicitly at (\ref{L}) above)
by replacing each~$N_d^c$ with the variable~$A_d^c(n)$. Let $m$ and $M$ be
coprime integers such that $mM=n$; suppose $j|m$, $i|j$, $J|M$, $I|J$. Then we have
\begin{eqnarray*}
X_{iI}^{jJ}(mM) & = & \sum_{d|iI}\phi(d)\sum_{\stackrel{\stackrel{\scriptstyle{c|mM}}{\scriptstyle{d|c}}}
{\scriptstyle{jJ|\lcm(iI,c)}}}\frac{mM}{\lcm(jJ,c)}A_d^c(mM)  \\
& = & \sum_{d|i}\sum_{D|I}
\;\sum_{\stackrel{\stackrel{\scriptstyle{c|m}}{\scriptstyle{d|c}}}{\scriptstyle{j|\lcm(i,c)}}}
\;\sum_{\stackrel{\stackrel{\scriptstyle{C|M}}{\scriptstyle{D|C}}}{\scriptstyle{J|\lcm(I,C)}}}
\frac{m\phi(d)A_d^c(m)}{\lcm(j,c)}\otimes \frac{M\phi(D)A_D^C(M)}{\lcm(J,C)} \\
& = & X_i^j(m)\otimes X_I^J(M).
\end{eqnarray*}

Now let $Y_i^j(n)$ be the expression obtained from $R_i^j$ (given at (\ref{R}) above)
by replacing each~$N_d^c$ with~$A_d^c(n)$. Then
an argument similar to that above shows that
\[
Y_{iI}^{jJ}(mM)=Y_i^j(m)\otimes Y_I^{J}(M).
\]

This tensor factorization transfers easily to the matrices $L$ and $R$; for the rest of this section we shall use the more
explicit notation $L(n)$ and $R(n)$ for these matrices. Then it is easy to see from the equations above that
(provided that rows and columns are suitably ordered),
\[
L(n)=L(m)\otimes L(M),\;\; R(n)=R(m)\otimes R(M),
\]
and it follows that
\[
R(n)L(n)^{-1}=R(m)L(m)^{-1}\otimes R(M)L(M)^{-1}.
\]

The reduction of Lemma \ref{RLinv} to the case where $n$ is a prime power is now straightforward. For we notice that
$R(n)L(n)^{-1}$ is diagonalizable if its tensor factors are;
its eigenvalues are the products of the
eigenvalues of the tensor factors, with corresponding multiplicities.
It is not difficult to see that if Lemma~\ref{RLinv} is true for prime powers then
the multiplicativity of the arithmetic functions $\mu$ and $\tau$ will ensure
that it is true for all $n$.

\section{Proof in the prime-power case}{\label{primepower}}
We consider the matrix $RL^{-1}$ in the case when $n$ is a prime power $p^a$. According to Lemma~\ref{RLinv}
(which we have to verify), this matrix should have precisely three eigenspaces: a $1$-eigenspace of dimension
$a+1$, a $(-1/p)$-eigenspace of dimension $a$, and a kernel with dimension
\[
\sum_{b=2}^a\tau(p^{a-b})=\frac{a(a-1)}{2}.
\]

We have already, in Proposition \ref{e1}, established the existence of 
an $(a+1)$-dimensional space of
eigenvectors with eigenvalue $1$. We shall show next that the kernel of $RL^{-1}$ has dimension at
least $a(a-1)/2$. Lastly we shall exhibit a set of $a$ linearly independent row eigenvectors for~$RL^{-1}$ 
with eigenvalue $-1/p$, 
thus completing the proof of Lemma~\ref{RLinv}.

\begin{proposition}\label{e0}
\[
\dim \ker RL^{-1}\ge {a\choose 2}=\frac{a(a-1)}{2}.
\]
\end{proposition}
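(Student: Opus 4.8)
The plan is to convert the claimed lower bound on $\dim\ker RL^{-1}$ into an upper bound on the rank of $R$. Since $L$ is invertible by Proposition~\ref{Linvertible}, we have $\operatorname{rank} RL^{-1}=\operatorname{rank} R$; and $RL^{-1}$ is a square matrix whose size is the number of pairs $(p^{d},p^{c})$ with $0\le d\le c\le a$, namely $\tfrac12(a+1)(a+2)$. Hence $\dim\ker RL^{-1}=\tfrac12(a+1)(a+2)-\operatorname{rank} R$, so it will suffice to prove that $\operatorname{rank} R\le 2a+1$, since then
\[
\dim\ker RL^{-1}\;\ge\;\tfrac12(a+1)(a+2)-(2a+1)\;=\;\tfrac12 a(a-1).
\]

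To bound the rank I would specialize the formula~(\ref{R}) for $R_i^j$ to the prime-power case; write $i=p^{\alpha}$ and $j=p^{\beta}$ with $\alpha\le\beta\le a$. If $\alpha=\beta$ then $\Omega_i^i$ is trivial, so the row of $R$ indexed by $(i,i)$ equals the corresponding row of $L$, and there are exactly $a+1$ such diagonal rows. If $\alpha<\beta$ then the integer $v$ of Lemma~\ref{GCL} equals $1$ (the only divisor of $p^{\alpha}$ coprime with $p^{\beta-\alpha}$ is $1$, as $\beta>\alpha$), so $u=p^{\alpha}$ and $jd/v=p^{\beta}$; moreover the condition $j\mid\lcm(i,c)$ forces $p^{\beta}\mid c$, whence $\lcm(j,c)=c$, and~(\ref{R}) collapses to
\[
R_i^j\;=\;p^{\alpha}\sum_{c=\beta}^{a}p^{\,a-c}\,N_{p^{\beta}}^{p^{c}}.
\]
The crucial feature is that the linear form $\sum_{c=\beta}^{a}p^{a-c}N_{p^{\beta}}^{p^{c}}$ does not depend on $\alpha$. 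So for each fixed $\beta\in\{1,\dots,a\}$ the rows of $R$ indexed by $(p^{\alpha},p^{\beta})$ with $0\le\alpha<\beta$ are all scalar multiples of a single vector. It follows that the row space of $R$ is spanned by the $a+1$ diagonal rows together with one representative of each of the $a$ families $\beta=1,\dots,a$, and hence $\operatorname{rank} R\le(a+1)+a=2a+1$, as needed.

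The only step that requires genuine care is the specialization of~(\ref{R}): one must check that for $\alpha<\beta$ the sole divisor of $p^{\alpha}$ coprime with $p^{\beta-\alpha}$ is $1$, that then $jd/v=p^{\beta}$, and that the summation conditions $d\mid c$ and $j\mid\lcm(i,c)$ together reduce to $p^{\beta}\mid c$. Granting this, the proportionality of the off-diagonal rows within each family is immediate, and the concluding arithmetic is routine; the substantive content of the proposition is exactly this proportionality.
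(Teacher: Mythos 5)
Your proposal is correct and follows essentially the same route as the paper: both reduce to bounding the rank of $R$ using the invertibility of $L$, specialize~(\ref{R}) to get $R_{p^\alpha}^{p^\beta}=p^\alpha\sum_{c=\beta}^{a}p^{a-c}N_{p^\beta}^{p^c}$ for $\alpha<\beta$, and observe that the rows in each family indexed by a fixed $\beta$ are proportional. The paper phrases the conclusion as ${a\choose 2}$ rows being redundant multiples of the $r=0$ rows rather than via the explicit count $\operatorname{rank} R\le 2a+1$, but the argument is the same.
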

\begin{proof} It is clearly sufficient to show that the nullity of $R$ is at least ${a\choose 2}$.
Choose $r$ and $s$ such that $0\le r<s\le a$. There is a row of $R$ corresponding to the expression
$R_{p^r}^{p^s}$ defined in~(\ref{R}); 
this expression involves the quantities $u$ and $v$. Since $v$ is the
largest divisor of $p^r$ coprime with~$p^s/p^r$, it is clearly equal to $1$; 
it follows that $u=p^r$. We derive the simplified formula
\[
R_{p^r}^{p^s} = p^r\sum_{b=s}^{a}p^{a-b}N_{p^s}^{p^b}.
\]
But the summation here does not depend on $r$; in fact, it is clear that
$R_{p^r}^{p^s}=p^r\times R_{1}^{p^s}$. Thus each of the ${a \choose 2}$ rows
indexed by values $r,s$ such that $1 \le r < s \le a$ is a linear multiple of a row for which $r=0$.
This establishes the proposition.\end{proof}

We note that the eigenspace of $RL^{-1}$ for the eigenvalue $-1/p$ is equal to the kernel of $pR+L$.
\begin{proposition}\label{ep}
\[
\dim \ker (pR+L) \ge a.
\]
\end{proposition}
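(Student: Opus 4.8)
The plan is to compute the kernel of $pR+L$ exactly and show it has dimension $a$; since this kernel is the $(-1/p)$-eigenspace of $RL^{-1}$ identified just above, this gives the stated bound (with room to spare). Write $n=p^a$, so that the rows and columns of $pR+L$ are indexed by the $\binom{a+2}{2}$ pairs $(p^r,p^s)$ with $0\le r\le s\le a$. For $0\le e\le s\le a$ set
\[
T_{e,s}=\sum_{f=s}^{a}p^{a-f}N_{p^e}^{p^f};
\]
for each fixed $e$ the substitution $(N_{p^e}^{p^f})_{e\le f\le a}\leftrightarrow(T_{e,s})_{e\le s\le a}$ is invertible, so the quantities $T_{e,s}$ serve as an alternative coordinate system on the space of vectors $(N_{p^e}^{p^f})$.

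The first step is to specialise the formulas $(\ref{L})$ and $(\ref{R})$ to the prime-power case. As recorded in the proof of Proposition~\ref{e0}, $R_{p^r}^{p^s}=p^{r}\sum_{b=s}^{a}p^{a-b}N_{p^s}^{p^b}=p^{r}T_{s,s}$ when $r<s$; and reading $(\ref{L})$ off directly gives $L_{p^r}^{p^s}=\sum_{e=0}^{r}\phi(p^e)T_{e,s}$ for $r<s$, while on the diagonal $L_{p^s}^{p^s}=\sum_{e=0}^{s}\phi(p^e)T_{e,s}+p^{a-s}W_s$, where $W_s=\sum_{e=0}^{s-1}\phi(p^e)\sum_{f=e}^{s-1}N_{p^e}^{p^f}$. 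Since the equation $\Omega_i^i$ is trivial, the row of $pR+L$ indexed by $(p^s,p^s)$ is simply $(p+1)L_{p^s}^{p^s}$, so the corresponding equation of $(pR+L)N=0$ is $L_{p^s}^{p^s}=0$.

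Next I would split the equations $(pR+L)N=0$ into two families. For $r<s$ the row $(p^r,p^s)$ reads $p^{r+1}T_{s,s}+\sum_{e=0}^{r}\phi(p^e)T_{e,s}=0$; taking the $r=0$ equation together with the successive differences of consecutive equations, and using $\phi(p^r)=p^{r}-p^{r-1}$, shows that for each $s$ these $s$ equations are jointly equivalent to $T_{e,s}=-p\,T_{s,s}$ for $0\le e<s$. As $s$ varies this is a system of $\binom{a+1}{2}$ relations which is triangular in the $T_{e,s}$ — the coordinate $T_{e,s}$ with $e<s$ occurs in exactly one of them — hence independent, and it cuts the solution space down to the $(a+1)$-dimensional subspace coordinatised by $T_{0,0},T_{1,1},\dots,T_{a,a}$. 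Substituting these relations into the $a+1$ diagonal equations $L_{p^s}^{p^s}=0$, the sum $\sum_{e=0}^{s}\phi(p^e)T_{e,s}$ collapses (using $\sum_{e=0}^{s-1}\phi(p^e)=p^{s-1}$) to $-p^{s-1}T_{s,s}$, while rewriting each $N_{p^e}^{p^f}$ in $W_s$ as $p^{f-a}(T_{e,f}-T_{e,f+1})$, applying the relations, and telescoping yields $p^{a-s}W_s=p^{s-1}T_{s,s}+p^{-s}T_{0,0}$. Hence for $s\ge1$ the diagonal equation becomes $p^{-s}T_{0,0}=0$, and for $s=0$ it is $L_1^1=T_{0,0}=0$. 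So every diagonal equation is equivalent, modulo the first family, to the single condition $T_{0,0}=0$, contributing exactly one further independent relation; therefore $\dim\ker(pR+L)=(a+1)-1=a$.

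The step I expect to be the main obstacle is this last telescoping: after imposing the relations $T_{e,s}=-p\,T_{s,s}$, one must check that the correction term $p^{a-s}W_s$ simplifies in just such a way that every diagonal equation collapses to the same condition on $T_{0,0}$ alone. Everything else — specialising the two formulas, the differencing argument in the off-diagonal family, and the final dimension count — is routine bookkeeping. (If one wants only the inequality $\dim\ker(pR+L)\ge a$, it suffices to exhibit the $a$-dimensional subspace cut out by $T_{e,s}=-p\,T_{s,s}$ $(e<s)$ and $T_{0,0}=0$ and verify directly that it consists of solutions of $(pR+L)N=0$, which still requires the same computation with $W_s$.)
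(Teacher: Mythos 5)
Your proof is correct, and I have checked the two identities on which it turns: modulo the relations $T_{e,s}=-pT_{s,s}$ for $e<s$, one indeed has $\sum_{e=0}^{s}\phi(p^e)T_{e,s}=-p^{s-1}T_{s,s}$ and $p^{a-s}W_s=p^{s-1}T_{s,s}+p^{-s}T_{0,0}$, so every diagonal equation does collapse to $T_{0,0}=0$ (I verified this both in general and in the case $a=1$). However, your route is genuinely different from the paper's. The paper never solves the system $(pR+L)N=0$: instead, for each $b\in\{0,\dots,a-1\}$ it writes down an explicit \emph{row} vector $w^b$ supported on the three index pairs $(p^b,p^b)$, $(p^b,p^{b+1})$, $(p^{b+1},p^{b+1})$, with entries $1$, $p^2-1$, $-p$ respectively, and verifies $w^b(pR+L)=0$ by checking the coefficient of each variable $N_{p^k}^{p^l}$ in the corresponding combination of $L_{p^b}^{p^b}$, $L_{p^b}^{p^{b+1}}$, $R_{p^b}^{p^{b+1}}$ and $L_{p^{b+1}}^{p^{b+1}}$; these $a$ vectors are visibly independent, and since the matrix is square the left nullity equals the nullity, giving the bound. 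Your row reduction in the coordinates $T_{e,s}$ is heavier but buys more: it determines the right kernel exactly as the $a$-dimensional space cut out by $T_{e,s}=-pT_{s,s}$ ($e<s$) together with $T_{0,0}=0$, and in particular gives explicit $(-1/p)$-eigenvectors of $RL^{-1}$ in the original variables. That extra precision is not needed for the argument: the lower bounds of Propositions~\ref{e1}, \ref{e0} and~\ref{ep} already sum to $\binom{a+2}{2}$, the size of the matrix, which forces all three to be equalities. The paper's guess-and-verify computation is correspondingly shorter, at the cost of offering no hint of where the vectors $w^b$ come from; your calculation in effect explains them.
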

\begin{proof} Recall that both the rows and the columns of our matrix are indexed by pairs $(i,j)$ such that
$j|p^a$ and $i|j$. For $b\in\{0,\dots,a-1\}$, define $w^b$  to be the row vector whose
$(i,j)$th entry is given by
\[
\left\{\begin{array}{cl}
1 & \textrm{if $i=j=p^b$,}\\
p^2-1 & \textrm{if $i=p^b$ and $j=p^{b+1}$,}\\
-p & \textrm{if $i=j=p^{b+1}$,}\\
0 & \textrm{otherwise.}
\end{array}\right.
\]
It is clear that the vectors $w^b$ are linearly independent, since for each $b$ the $(p^b,p^{b+1})$th
coefficient is $p^2-1$ for $w^b$ and $0$ for $w^c$ whenever $c\neq b$.

To show that $w^b (pR+L) = 0$, we need to show that
\[
(pR_{p^b}^{p^b}+L_{p^b}^{p^b})+ (p^2-1)(pR_{p^b}^{p^{b+1}}+L_{p^b}^{p^{b+1}})-p(pR_{p^{b+1}}^{p^{b+1}}
+L_{p^{b+1}}^{p^{b+1}})=0.
\]
The expressions $R_{p^k}^{p^k}$ and $L_{p^k}^{p^k}$ are identical for all $k$, and thus the left-hand-side of
the equation above becomes
\begin{equation}\label{equ}
(p+1)L_{p^b}^{p^b}+ (p^2-1)(pR_{p^b}^{p^{b+1}}+L_{p^b}^{p^{b+1}})-p(p+1)L_{p^{b+1}}^{p^{b+1}}=0.
\end{equation}
From (\ref{L}) and (\ref{R}) we obtain the following equations:
\begin{align*}
L_{p^b}^{p^b} &= \sum_{k=0}^{b}\phi(p^k)\left(\sum_{l=k}^{b}p^{a-b}N_{p^k}^{p^l}+
\sum_{l=b+1}^{a}p^{a-l}N_{p^k}^{p^l}\right), \\
L_{p^b}^{p^{b+1}} &= \sum_{k=0}^{b}\phi(p^k)\sum_{l=b+1}^{a}p^{a-l}N_{p^k}^{p^l}, \\
R_{p^b}^{p^{b+1}} &= p^b\sum_{l=b+1}^{a}p^{a-l}N_{p^{b+1}}^{p^l}.
\end{align*}
We consider the coefficient of $N_{p^k}^{p^l}$ in (\ref{equ}) in various cases:
\smallskip
\begin{enumerate}
\item If $0\le k\le l\le b$ then $N_{p^k}^{p^l}$ occurs in $L_{p^b}^{p^b}$ and in
$L_{p^{b+1}}^{p^{b+1}}$, but nowhere else in (\ref{equ}). Its coefficient in (\ref{equ}) is
\[
(p+1)\phi(p^k)p^{a-b}-p(p+1)\phi(p^k)p^{a-b-1}=0.
\]
\item If $0\le k\le b< l$, then $N_{p^k}^{p^l}$ occurs in $L_{p^b}^{p^b}$,
$L_{p^{b+1}}^{p^{b+1}}$ and in $L_{p^{b}}^{p^{b+1}}$\!\!. Its coefficient in (\ref{equ}) is
\[
(p+1)\phi(p^k)p^{a-l}- p(p+1)\phi(p^k)p^{a-l}+ (p^2-1)\phi(p^k)p^{a-l}=0.
\]
\item If $b+1 = k \le l$ then $N_{p^k}^{p^l}$ occurs in $L_{p^{b+1}}^{p^{b+1}}$ and in
$R_{p^{b}}^{p^{b+1}}$\!\!. Its coefficient in (\ref{equ}) is
\[
-p(p+1)\phi(p^{b+1})p^{a-l}+(p^2-1)p^{b+1}p^{a-l}=0,
\]
since $\phi(p^{b+1})=(p-1)p^b$.
\end{enumerate}
\smallskip

\noindent Since the three cases considered here exhaust all of the variables $N_{p^k}^{p^l}$ which occur in
(\ref{equ}), this completes the proof of the proposition. \end{proof}

We have now completed the proof of Theorem~\ref{MT}. We remind the reader 
of the structure of the
proof: Propositions~\ref{e1},~\ref{e0} and~\ref{ep} together imply the correctness of Lemma~\ref{RLinv} in the
case where $|G/H|$ is a prime power. By the argument of Section~\ref{reduction}, this is sufficient to establish
it in the general case. It follows that Proposition~\ref{e1} describes the general solution to the system of
equations~$\Omega_i^j$ defined at~(\ref{Omega}), and this suffices to prove~Theorem \ref{MT}.

\section{Application: Counting the conjugacy classes of a subgroup}\label{application}

As an application of Theorem \ref{MT}, we find expressions for the number of conjugacy classes of the subgroup
$K_d$ under its own action, in terms of quantities relating only to the action of $G$. We first define these
quantities.
\begin{definition*} We define
\begin{enumerate} 
\item $T_d$ to be the number of $G$-conjugacy classes in the 
coset $\Gamma_d$;
\item $S_d$ to be the number of $G$-conjugacy classes in the subgroup $K_d$;
\item $S^*_d$ to be the number of conjugacy classes of $K_d$
under its own action.
\end{enumerate}
\end{definition*}

Now we have the following theorem.

\begin{theorem}\label{App}
\smallskip
\begin{enumerate}
\item $\displaystyle{S^*_d=n\sum_{c|n}\sum_{a|c}\mu(c/a)\frac{\hcf(a,d)}{\lcm(a,d)}T_c}$,\smallskip
\item $\displaystyle{S^*_d=n\sum_{c|n}\sum_{a|c}\sum_{b|c}\frac{\mu(c/a)\mu(c/b)}{\phi(c)}
\frac{\hcf(a,d)}{\lcm(a,d)}S_b}$.
\end{enumerate}
\end{theorem}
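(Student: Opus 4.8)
The plan is to derive both formulas by combining Theorem~\ref{MT} with a Möbius-inversion argument. The starting point is the basic relation between $T_c$, the count of $G$-classes in the coset $\Gamma_c$, and $N_d^c$. By Theorem~\ref{MT} the $G$-classes in $\Gamma_d$ with centralizing subgroup $K_c$ number $N_d^c=N_1^c$ whenever $d\mid c$, so these are independent of $d$; write this common value as $P_c$. A class in $\Gamma_d$ has centralizing subgroup $K_c$ for some $c$ with $d\mid c\mid n$, so $T_d=\sum_{c\,:\,d\mid c\mid n}P_c$, and in particular $T_c$ depends only on $c$ in the relevant sense. The key observation for part~(i) is to compute $S^*_d$, the number of $K_d$-classes under its own action. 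A $G$-class in $K_d$ with centralizing subgroup $K_c$ (necessarily $c\mid d$ fails in general — rather $c$ ranges over divisors of $n$ with the class meeting $K_d$) splits under the $K_d$-action into $n/\lcm(c,d)$ classes, exactly as in the derivation of~(\ref{L}). Counting the $G$-classes in $K_d$ with centralizing subgroup $K_c$: such a class lies in some coset $\Gamma_e$ with $e\mid d$, there are $\phi(e)$ cosets of $H$ in $K_d$ of order $e$, and in each the count of classes with centralizing subgroup $K_c$ is $P_c$ (for $e\mid c$). Hence
\[
S^*_d=\sum_{e\mid d}\phi(e)\sum_{\substack{c\mid n\\ e\mid c}}\frac{n}{\lcm(c,d)}\,P_c
     =n\sum_{c\mid n}\frac{P_c}{\lcm(c,d)}\sum_{e\mid\hcf(c,d)}\phi(e)
     =n\sum_{c\mid n}\frac{\hcf(c,d)}{\lcm(c,d)}\,P_c.
\]
So it remains only to express $P_c$ in terms of the $T_a$. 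From $T_a=\sum_{c\,:\,a\mid c\mid n}P_c$ (divisor sum over multiples), Möbius inversion on the divisor lattice gives $P_c=\sum_{a\,:\,c\mid a\mid n}\mu(a/c)\,T_a$; substituting and reindexing (summing $c\mid a$ with weight $\mu(a/c)$, then noting $\hcf(c,d)/\lcm(c,d)$ must be reinterpreted after swapping — one writes $P_a$ in place using $\sum_{b\mid a}\mu(a/b)T_{\dots}$ with the multiplicative relation $T_b$ depending on $b$ via its divisors) yields
\[
S^*_d=n\sum_{c\mid n}\sum_{a\mid c}\mu(c/a)\,\frac{\hcf(a,d)}{\lcm(a,d)}\,T_c,
\]
after collecting terms so that the outer index is a divisor $c$ of $n$ and the inner index $a$ runs over divisors of $c$. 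This is part~(i).

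For part~(ii) the plan is to eliminate $T_c$ in favour of $S_b$, the number of $G$-classes in the subgroup $K_b$. Since $K_b$ is the disjoint union of the $\phi(e)$ cosets of $H$ of each order $e\mid b$, and each such coset carries the same number $T_e$ of $G$-classes (by Lemma~\ref{TL}, cosets of equal order in $G/H$ contain equally many $G$-classes), we have $S_b=\sum_{e\mid b}\phi(e)\,T_e$. This is a convolution identity $S=\phi * T$ (Dirichlet convolution on the divisor lattice of $n$), which inverts to $T=(\phi^{-1})*S$; since $\phi^{-1}=\mu * \operatorname{Id}^{-1}$ is not quite as clean, it is more transparent to invert directly: writing $T_c=\sum_{b\mid c}\lambda(c/b)S_b$ and matching against $S_b=\sum_{e\mid b}\phi(e)T_e$ forces $\sum_{e\mid c/b}\phi(e)\lambda((c/b)/e)=[c=b]$, i.e. $\lambda$ is the Dirichlet inverse of $\phi$. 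A standard computation gives $\phi^{-1}(m)=\sum_{d\mid m}\mu(d)\,d/? $ — in fact the cleanest route, and the one I expect the authors take, is to use the known identity $\phi^{-1}(m)=\sum_{d\mid m}\mu(m/d)\,\mu(d)/\phi(d)\cdot(\text{something})$; to avoid guessing, I would instead substitute the expression for $T_c$ from part~(i)'s inversion and the expression $S_b=\sum\phi(e)T_e$ simultaneously, solve the resulting triangular linear system over the divisor lattice, and verify that the coefficient of $S_b$ works out to $n\sum_{c\mid n}\sum_{a\mid c}\sum_{b\mid c}\frac{\mu(c/a)\mu(c/b)}{\phi(c)}\frac{\hcf(a,d)}{\lcm(a,d)}$. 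The appearance of $\mu(c/b)/\phi(c)$ is precisely the signature of the Dirichlet inverse of $\phi$ restricted to the divisors of a fixed $c$, so the bookkeeping should close.

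The main obstacle I anticipate is purely combinatorial: keeping the three nested divisor-lattice sums consistent through the Möbius inversions, in particular making sure that after inverting $T=$ (Möbius transform of $P$) and then $S=\phi*T$, the two Möbius-type kernels combine into the single factor $\mu(c/a)\mu(c/b)/\phi(c)$ with the outer index $c$ correctly identified as the "join" variable. Verifying the inversion of $S_b=\sum_{e\mid b}\phi(e)T_e$ — i.e. that the Dirichlet inverse of $\phi$ on the divisor poset of $n$ has the stated form — is the one genuinely non-formal ingredient; everything else (the splitting count $n/\lcm(c,d)$, the $\sum_{e\mid\hcf(c,d)}\phi(e)=\hcf(c,d)$ collapse, and the invocation of Theorem~\ref{MT} to get $N_d^c=N_1^c$) is routine.
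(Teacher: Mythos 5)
Your derivation of part (i) is correct and is essentially the paper's own: you obtain $S^*_d=n\sum_{c\mid n}\frac{\hcf(c,d)}{\lcm(c,d)}N_1^c$ exactly as the authors do (the splitting factor $n/\lcm(c,d)$, the collapse $\sum_{e\mid\hcf(c,d)}\phi(e)=\hcf(c,d)$, and Theorem~\ref{MT} to replace $N_b^c$ by $N_1^c$), and then pass to the $T_c$ by M\"obius inversion. The paper solves $\sum_{b\mid c}\alpha_b=n\,\hcf(c,d)/\lcm(c,d)$ for unknown coefficients $\alpha_b$ rather than inverting $T_a=\sum_{c\,:\,a\mid c\mid n}N_1^c$ directly as you do, but these are the same inversion read in opposite directions, and your interchange of the order of summation does land on the stated formula (the prose describing that interchange is garbled, but the computation it gestures at is valid).

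Part (ii), however, is not actually proved. You correctly write down $S_b=\sum_{e\mid b}\phi(e)T_e$, but you then misread this as the Dirichlet convolution $\phi*T$ and go looking for the Dirichlet inverse of $\phi$, which you cannot pin down; the argument ends with ``the bookkeeping should close,'' which is not a derivation. The point you are missing is that $\sum_{e\mid b}\phi(e)T_e$ is \emph{not} a Dirichlet convolution of $\phi$ with $T$ (that would be $\sum_{e\mid b}\phi(e)T_{b/e}$); it is the ordinary divisor-sum transform of the single function $e\mapsto\phi(e)T_e$. Classical M\"obius inversion therefore gives at once $\phi(c)T_c=\sum_{b\mid c}\mu(c/b)S_b$, i.e.\ $T_c=\phi(c)^{-1}\sum_{b\mid c}\mu(c/b)S_b$, and substituting this into part (i) produces the kernel $\mu(c/a)\mu(c/b)/\phi(c)$ immediately, with no triangular system to solve and no inverse of $\phi$ required. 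This is exactly what the paper does; it is a one-line fix, but as written your part (ii) has a genuine gap.
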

\begin{proof} Let $X$ be a $G$-conjugacy class inside $K_d$, and suppose that the centralizing subgroup of $X$ with
respect to~$H$ is~$K_c$.
Then the number of classes into which $X$ splits under conjugacy by $K_d$ is $n/\lcm(c,d)$. For each divisor
$b$ of $d$ there are $\phi(b)$ cosets of $H$ in $K_d$ whose order in the quotient group $K_d/H$ is $b$, and it follows from
these facts that
\[
S^*_d=\sum_{b|d}\phi(b)\sum_{\stackrel{\scriptstyle{c|n}}{\scriptstyle{b|c}}}\frac{n}{\lcm(c,d)}N_b^c=
\sum_{c|n}\sum_{b|\hcf(c,d)}\frac{\phi(b)n}{\lcm(c,d)}N_1^c,
\]
in which we have used Theorem \ref{MT} to replace $N_b^c$ with $N_1^c$. Now
\[
\sum_{b|\hcf(c,d)}\phi(b)=\hcf(c,d),
\]
and so
\[
S^*_d=\sum_{c|n}n\frac{\hcf(c,d)}{\lcm(c,d)}N_1^c.
\]

Now it is clear that
\[
T_b=\sum_{\stackrel{\scriptstyle{c|n}}{\scriptstyle{b|c}}}N_b^c=
\sum_{\stackrel{\scriptstyle{c|n}}{\scriptstyle{b|c}}}N_1^c,
\]
where here again we have invoked Theorem \ref{MT}. We look for quantities $\alpha_b$ such that
\mbox{$\sum_{b|n}\alpha_bT_b=S^*_d$.} 
We require that
\[
\sum_{c|n}\sum_{b|c}\alpha_bN_1^c = \sum_{c|n}n\frac{\hcf(c,d)}{\lcm(c,d)}N_1^c, \]
and clearly this implies that
\[
\sum_{b|c}\alpha_b=n\frac{\hcf(c,d)}{\lcm(c,d)}.
\]
By the M\"obius inversion formula (treating $d$ as constant) this condition is satisfied if and only if
\[
\alpha_b=n\sum_{a|b}\mu(b/a)\frac{\hcf(a,d)}{\lcm(a,d)},
\]
and the first part of the theorem now follows easily.

Finally, we observe that $S_d=\sum_{b|d}\phi(b)T_b$, and so by M\"obius inversion we have
\[
\phi(c)T_c=\sum_{b|c}\mu(c/b)S_b.
\]
By means of a simple substitution it is now easy to derive the second part of the theorem from the first.
\end{proof}

\affiliationone{
John R. Britnell, \\
School of Mathematics and Statistics, \\
Newcastle University, \\
Newcastle upon Tyne, \\
NE1 7RU, \\
UK
\email{j.r.britnell@ncl.ac.uk}
}
\affiliationtwo{
Mark Wildon, \\
Department of Mathematics, \\
Swansea University, \\
Swansea, \\
SA2 8PP, \\
UK
\email{m.j.wildon@swansea.ac.uk}
}

\end{document}